\newtheorem{Theorem}{Theorem}
\newtheorem{propo}[Theorem]{Proposition}
\newtheorem{Definition}[Theorem]{Definition}
\newtheorem{rema}[Theorem]{Remark}
\def\F{\mathbb{F}}
\newcommand{\ds}{\displaystyle}
\newcommand{\mus}{\mu^\mathrm{sym}}
\newcommand{\Ms}{M^\mathrm{sym}}
\newcommand{\ms}{m^\mathrm{sym}}
\begin{document}

\title[Modular curves and uniform tensor rank]{Dense families of modular curves, prime numbers and uniform symmetric tensor rank of multiplication in certain finite fields}

\author[S. Ballet]{St\'ephane Ballet}
\address{
Stéphane Ballet
\newline \indent 
Aix Marseille Univ, {CNRS}, Centrale Marseille, {I2M}, Marseille, 
France. Institut de
Math\'ematiques de Marseille, Case 907, 163 Avenue de Luminy, F-13288
Marseille Cedex 9, France.}
\email{stephane.ballet@univ-amu.fr}

\author[A. Zykin]{Alexey Zykin}
\address{
Alexey Zykin
\newline \indent
Laboratoire GAATI, Universit\'e de la Polyn\'esie fran\c caise
\newline \indent
BP 6570 --- 98702 Faa'a, Tahiti, Polyn\'esie fran\c caise
\newline \indent
National Research University Higher School of Economics
\newline \indent
AG Laboratory NRU HSE 
\newline \indent
Institute for Information Transmission Problems of the Russian Academy of Sciences
}
\email{alzykin@gmail.com}

\thanks{The second author was partially supported by ANR Globes ANR-12-JS01-0007-01 and by the Russian Academic Excellence Project '5-100'. }

\date{\today}

\keywords{Algebraic function field, tower of function fields, tensor rank, algorithm, finite field, modular curve, Shimura curve}

\begin{abstract}
We obtain new uniform bounds for the symmetric tensor rank of multiplication in finite extensions of any finite field $\F_p$ or $\F_{p^2}$ 
where $p$ denotes a prime number $\geq 5$. 
In this aim, we use the symmetric Chudnovsky-type generalized algorithm applied on sufficiently dense families of modular 
curves defined over $\F_{p^2}$ attaining the Drinfeld--Vladuts bound and on the descent of these families to the definition field $\F_p$. 
These families are obtained thanks to prime number density theorems of type Hoheisel, in particular a result due to Dudek (2016).
\end{abstract}

\maketitle



\section{Introduction}

\subsection{Notation}

Let $q=p^s$ be a prime power, $\F_q$ be the finite field with $q$ elements and $\F_{q^n}$
be the degree $n$ extension of $\F_q$. The multiplication of two elements of $\F_{q^n}$ is 
an $\F_q$-bilinear application from $\F_{q^n} \times \F_{q^n}$ onto $\F_{q^n}$.
It can be considered as an $\F_q$-linear application from the tensor product 
${\F_{q^n} \otimes_{\F_q} \F_{q^n}}$
onto $\F_{q^n}$. Consequently it can be also viewed as an element 
$T$ of ${(\F_{q^n} \otimes_{\F_q} \F_{q^n})^\star \otimes_{\F_q} \F_{q^n}}$,
namely an element of ${\F_{q^n}^{\,\star} \otimes_{\F_q} \F_{q^n}^{\,\star} \otimes_{\F_q} \F_{q^n}}$.
More precisely, when $T$ is written
\begin{equation}\label{tensor}
T=\sum_{i=1}^{r} x_i^\star\otimes y_i^\star\otimes c_i,
\end{equation}
where the $r$ elements $x_i^\star$ and the $r$ elements $y_i^\star$
are in the dual $\F_{q^n}^{\,\star}$ of $\F_q$ and the $r$ elements $c_i$ are in $\F_{q^n}$,
the following holds for any ${x,y \in \F_{q^n}}$:
$$
x\cdot y=\sum_{i=1}^r x_i^\star(x) y_i^\star(y) c_i.
$$

\begin{Definition}
The minimal number of summands in a decomposition of the multiplication tensor $T$ 
is called the rank of the tensor of the multiplication in the extension field $\F_{q^n}$ (or bilinear complexity of the multiplication) 
and is denoted by
$\mu_{q}(n)$:
$$
\mu_{q}(n)= \min\left\{r \; \Big| \; T=\sum_{i=1}^{r} x_i^\star\otimes y_i^\star\otimes c_i\right\}.
$$
\end{Definition}

It is known that the tensor $T$ can have a symmetric decomposition:
\begin{equation}\label{symtensor}
T=\sum_{i=1}^{r} x_i^\star\otimes x_i^\star\otimes c_i.
\end{equation}
 
\begin{Definition}
The minimal number of summands in a symmetric decomposition of the multiplication tensor $T$ 
is called the symmetric tensor rank of the multiplication (or the symmetric bilinear complexity of the multiplication) and is denoted by
$\mus_{q}(n)$:
$$
\mus_{q}(n)= \min\left\{r \; \Big| \; T=\sum_{i=1}^{r} x_i^\star\otimes x_i^\star\otimes c_i\right\}.
$$
\end{Definition}

From an asymptotical point of view, let us define the following \begin{equation}\label{M1}
\Ms_q= \limsup_{k \rightarrow \infty}\frac{\mus_q(k)}{k}, 
\end{equation}
\begin{equation}\label{m1}
\ms_q=\liminf_{k \rightarrow \infty}\frac{\mus_q(k)}{k}. 
\end{equation}

Let $F/\F_q$ be a function field of genus $g$ over the finite field $\F_q$ and $N_k(F)$ be the number
of places of degree $k$ of $F/\F_q$.

Let us define:
$$
N_q(g)= \max \big\{N_1(F)\, |\, F \mbox{ is a function field over }\F_q \mbox{ of genus }g  \big\}
$$
and
$$
A(q)=\limsup_{g\rightarrow +\infty} \frac{N_q(g)}{g}.
$$
We know that (Drinfeld--Vladuts bound):
$$A(q) \leq q^{\frac{1}{2}}-1,$$
the bound being attained if $q$ is a square.

\subsection{Known results}

The original algorithm of D.V. and G.V. Chudnovsky
introduced in \cite{chch} is symmetric by definition and leads to the
two following results from \cite{ball1}, \cite{baro1} and \cite{bapirasi}:

\begin{Theorem} \label{theoprinc}
Let $q$ be a prime power and let $n>1$ be an integer. 
Let $F/\F_q$ be an algebraic function field of genus $g$ 
and $N_k$ be the number of places of degree $k$ in $F/\F_q$.
If $F/\F _q$ is such that $2g+1 \leq q^{\frac{n-1}{2}}(q^{\frac{1}{2}}-1)$ then:
\begin{enumerate}[1)]
	\item if $N_1 > 2n+2g-2$, then 
	$$ 
	\mus_q(n) \leq 2n+g-1,
	$$
	\item if ${N_1+2N_2>2n+2g-2}$ and there exists a non-special divisor of degree $g-1,$  then 
$$
\mus_q(n)\leq 3n+2g.
$$
	
\end{enumerate}
\end{Theorem}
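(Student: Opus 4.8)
The plan is to realize the multiplication in $\F_{q^n}$ through the evaluation--interpolation (Chudnovsky--Chudnovsky) scheme on the function field $F/\F_q$, so that the rank is controlled by a number of places whose degrees are read off from Riemann--Roch inequalities. The starting point is the hypothesis $2g+1 \leq q^{\frac{n-1}{2}}(q^{\frac12}-1)$: by the usual lower bound for the number of places coming from the Weil bound applied to the zeta function, this is exactly what guarantees the existence of at least one place $Q$ of degree $n$. Fixing such a $Q$, its residue field is $F_Q \cong \F_{q^n}$, and I identify the target $\F_{q^n}$ with $F_Q$ once and for all.

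Next I would set up the algorithm with a divisor $D$ of degree $n+g-1$ whose support avoids $Q$ and the evaluation places. Two conditions are needed. First, the reduction map $\mathrm{ev}_Q \colon \mathcal{L}(D) \to F_Q$ must be surjective; since $\deg Q = n$ this means $\ell(D)-\ell(D-Q)=n$, which I obtain by choosing $D$ so that $D$ is non-special and $D-Q$ (of degree $g-1$) has $\ell(D-Q)=0$, making $\mathrm{ev}_Q$ an isomorphism. Second, for a suitable set of places $P_1,\dots,P_N$ the evaluation map $\mathrm{ev}\colon \mathcal{L}(2D)\to \bigoplus_i F_{P_i}$ must be injective, i.e. $\mathcal{L}(2D-\sum_i P_i)=0$. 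Granting both, the algorithm is: lift $x,y\in F_Q$ to $f,g\in\mathcal{L}(D)$ by the fixed linear section of $\mathrm{ev}_Q$, form the products $f(P_i)g(P_i)$ in the residue fields $F_{P_i}$, recover $fg\in\mathcal{L}(2D)$ by injectivity, and read off $xy=\mathrm{ev}_Q(fg)$. Because for each $i$ the same linear form is applied to $x$ and to $y$ (namely evaluation at $P_i$ of the lift), the resulting decomposition is symmetric, and its length is the total cost of the componentwise products; hence $\mus_q(n)$ is bounded by that cost.

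For part (1) all $P_i$ are chosen of degree $1$, so each componentwise product costs one multiplication in $\F_q$ and the rank equals $N$. Taking $\deg 2D = 2n+2g-2$ and aiming for $N=2n+g-1$ forces $\deg\big(2D-\sum_{i=1}^{N}P_i\big)=g-1$, so injectivity amounts to making this degree-$(g-1)$ divisor non-special. The heart of the argument is therefore an existence statement: one must produce a divisor $D$ of degree $n+g-1$ and a set of $2n+g-1$ rational places simultaneously realizing the isomorphism condition on $\mathrm{ev}_Q$ and the non-speciality of $2D-\sum_i P_i$. This is exactly where the hypothesis $N_1>2n+2g-2$ enters: the surplus of rational places beyond the $2n+g-1$ actually used provides the room needed to avoid the special divisors (the theta divisor) in the relevant counting, and I expect this existence/avoidance step to be the main obstacle rather than the algorithm itself.

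For part (2) rational places alone may be too few, so I would supplement them with places of degree $2$. Evaluating at a degree-$2$ place $R$ lands in $F_R\cong\F_{q^2}$, and computing the product $f(R)g(R)$ there symmetrically costs $\mus_q(2)=3$; thus each such place contributes $3$ to the rank while contributing $2=\deg R$ to the degree available for the injectivity condition. Now the non-speciality of a degree-$(g-1)$ divisor can no longer be arranged from the point count, which is why the existence of a non-special divisor of degree $g-1$ is taken as a hypothesis: it is used to keep $\mathrm{ev}_Q$ an isomorphism with $\deg D=n+g-1$. The injectivity of $\mathrm{ev}$ on $\mathcal{L}(2D)$ is then secured by the condition $N_1+2N_2>2n+2g-2$, which guarantees enough total degree among the available places of degrees $1$ and $2$. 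Balancing the number of degree-$1$ places (cost $1$, degree $1$) against the number of degree-$2$ places (cost $3$, degree $2$) through this inequality, and carrying out the bookkeeping of the worst allocation, yields the bound $\mus_q(n)\leq 3n+2g$. The delicate points here are the same non-speciality and existence issues as in part (1), now relying on the explicit hypothesis, together with the correct accounting of the degree-$2$ contributions.
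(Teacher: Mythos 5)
You should first note that the paper itself contains no proof of this theorem: it is quoted as known, from \cite{ball1}, \cite{baro1} and \cite{bapirasi}, so your attempt can only be judged against the arguments in those references. Your reconstruction of the algorithmic skeleton is faithful to them: the hypothesis $2g+1 \leq q^{\frac{n-1}{2}}(q^{\frac{1}{2}}-1)$ is indeed what guarantees a place $Q$ of degree $n$; the choice $\deg D = n+g-1$ with $\mathrm{ev}_Q$ an isomorphism, the injectivity of $\mathcal{L}(2D)\to\bigoplus_i F_{P_i}$, the symmetry of the resulting decomposition, and the cost accounting ($1$ per degree-one place, $\mus_q(2)=3$ per degree-two place) are exactly the Chudnovsky--Chudnovsky method as it is run in those papers, and your identification of where each hypothesis is used (in particular that the non-special divisor of degree $g-1$ in part 2) serves to make $\mathrm{ev}_Q$ an isomorphism) is correct.

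The genuine gap is the one you flag yourself: the simultaneous existence of $D$ and of the evaluation places with $\ell(D-Q)=0$ and $\ell\bigl(2D-\sum_i P_i\bigr)=0$. This is not a technical "obstacle" to be deferred --- it is the entire mathematical content of the theorem beyond the classical algorithm, and it is historically the step where published proofs contained errors (see the discussion in \cite{Rand3} and \cite{bapirasi}). Moreover, your proposed mechanism for it --- that the surplus of rational places gives "room" to avoid the theta divisor "in the relevant counting" --- does not work as stated: over $\F_q$ the class group is finite, and a crude count of special classes of degree $g-1$ against all classes fails in general (this is precisely why part 2) must \emph{assume} a non-special divisor of degree $g-1$ instead of deriving one). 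The actual argument is of a different nature, a greedy, one-place-at-a-time construction. For part 1) one may take the class $D \sim \sum_{i=1}^{2n+g-1}P_i - Q$ (then replace $D$ by a representative with support disjoint from the $P_i$ and $Q$), so that the two conditions collapse into the single condition $\ell\bigl(\sum_i P_i - 2Q\bigr)=0$; one then chooses $P_1,P_2,\dots$ successively so that $\ell\bigl(\sum_{i\le j}P_i - 2Q\bigr)$ stays equal to $0$ at every step. By Riemann--Roch, adding $P$ preserves this dimension iff $P$ is not a base point of $\bigl|K-\sum_{i\le j}P_i+2Q\bigr|$, and the forbidden places at step $j$ (base points plus the $j$ places already used) number at most $(2n+2g-2-j)+j = 2n+2g-2$; the hypothesis $N_1>2n+2g-2$ is exactly what makes the next choice possible. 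In part 2) the analogous step-by-step argument is run over places of degree $1$ and $2$ starting from $D = A+Q$ with $A$ the hypothesized non-special divisor, with the extra subtlety that a degree-two place must impose two independent conditions, and the final bookkeeping (including the possible overshoot in degree when the last place chosen has degree $2$) is what lands on $3n+2g$; you assert this bookkeeping rather than perform it. Without these arguments, what you have written is an accurate description of the algorithm together with a restatement of what remains to be proved.
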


\begin{Theorem}
  Let $q$ be a power of a prime $p$ and let $n$ be an integer. 
  Then the symmetric tensor rank $\mus_q(n)$ of
  multiplication in any finite field $\F_{q^n}$ is linear with respect to the
  extension degree; more precisely, there exists a constant $C_q$ such that for any integer $n>1$,
  \begin{equation*}
    \mus_q(n) \leq C_q n.
  \end{equation*}
\end{Theorem}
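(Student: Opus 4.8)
The plan is to derive the linear bound from Theorem \ref{theoprinc} by supplying it, for each $n$, with a function field over $\F_q$ whose genus grows only linearly in $n$ while still carrying enough low-degree places. I would first establish the bound over a sufficiently large square field and then reduce the general case to it by a descent on the base field.

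Assume first that $q$ is a square large enough that $A(q) = \sqrt q - 1 > 2$ (for safety, say $q \ge 49$). By the existence of asymptotically good towers attaining the Drinfeld--Vladuts bound there is a sequence $F_i/\F_q$ of function fields with genera $g_i \to \infty$ and $N_1(F_i)/g_i \to \sqrt q - 1$. Given $n$, I would take $F = F_i$ with the smallest genus $g$ exceeding a fixed multiple $c\,n$ of $n$; since consecutive genera in a tower grow by a bounded factor, $g$ stays bounded by a constant multiple of $n$. For such $F$ the numerical condition $2g+1 \le q^{(n-1)/2}(\sqrt q - 1)$ holds for all large $n$ because its right-hand side is exponential in $n$ whereas $g$ is linear; and $N_1 > 2n + 2g - 2$ holds once $c$ is chosen so that $(\sqrt q - 1)c > 2 + 2c$, which is solvable precisely because $\sqrt q - 1 > 2$. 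Part 1) of Theorem \ref{theoprinc} then yields $\mus_q(n) \le 2n + g - 1 \le C_q n$, the finitely many remaining small $n$ being absorbed into $C_q$.

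For an arbitrary prime power $q$ I would pass to an auxiliary extension. Choose the least $m \ge 1$ with $q^m$ a square satisfying the hypothesis of the previous paragraph; such an $m$ exists and is bounded in terms of $q$ alone, since $q^2$ is already a square and its powers are arbitrarily large. The first step provides a constant $C_{q^m}$ with $\mus_{q^m}(k) \le C_{q^m}\,k$ for all $k$. I would then combine two standard facts. The first is the multiplicativity of symmetric bilinear complexity under composition of extensions, $\mus_q(mk) \le \mus_q(m)\,\mus_{q^m}(k)$, valid because an $\F_q$-symmetric algorithm for $\F_{q^m}$ composes with an $\F_{q^m}$-symmetric algorithm for $\F_{q^{mk}} = \F_{(q^m)^k}$ to give an $\F_q$-symmetric algorithm for $\F_{q^{mk}}$. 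The second is monotonicity, $\mus_q(N) \le \mus_q(N')$ whenever $N \mid N'$, obtained by restricting a symmetric algorithm for the larger field and projecting $\F_q$-linearly onto the subfield. Taking $N' = m\lceil N/m\rceil$ and chaining these gives $\mus_q(N) \le \mus_q(m)\,C_{q^m}\lceil N/m\rceil \le C_q N$ for every $N$, after enlarging the constant.

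I expect the genuine obstacle to be the small base fields, notably $q \in \{2,3\}$ and small non-square $q$, for which curves over $\F_q$ carry too few rational places for the inequality $N_1 > 2n + 2g - 2$ to be met with $g$ linear in $n$; this is exactly why the descent step is needed. The delicate point there is to keep the auxiliary degree $m$, the intermediate constant $C_{q^m}$, and the projection underlying monotonicity all independent of $n$, so that the final $C_q$ depends on $q$ only. An alternative avoiding descent would be to invoke part 2) of Theorem \ref{theoprinc}, using the places of degree $2$ that are already abundant once $q^2$ meets the Drinfeld--Vladuts bound together with a non-special divisor of degree $g-1$; I would choose whichever route yields the cleaner constant.
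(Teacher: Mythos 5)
Your first step (large square $q$, a tower attaining the Drinfeld--Vladuts bound with bounded ratio of consecutive genera, part 1) of Theorem \ref{theoprinc}) is sound, and both auxiliary facts are correct \emph{as you state them}: symmetric algorithms compose, giving $\mus_q(mk)\le\mus_q(m)\,\mus_{q^m}(k)$, and subfield restriction plus an $\F_q$-linear projection gives $\mus_q(N)\le\mus_q(N')$ when $N\mid N'$. The genuine gap is in how you chain them. Setting $N'=m\lceil N/m\rceil$ makes $N'$ a multiple of $m$, but not a multiple of $N$, which is what your monotonicity lemma requires. Concretely, for $q=2$ you need $m=6$ (so that $q^m=64$ is a large enough square), and for $N=7$ you get $N'=12$; but $\F_{2^7}$ is not a subfield of $\F_{2^{12}}$ since $7\nmid 12$, so the restriction--projection argument does not apply. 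Nor can you fall back on general monotonicity $\mus_q(N)\le\mus_q(N')$ for all $N\le N'$: this is not a known property of $\mus_q$ (neither subalgebra nor quotient relates $\F_{q^N}$ to $\F_{q^{N'}}$ when $N\nmid N'$), and it is precisely to avoid it that the proofs in the literature produce a suitable curve for every single degree $n$ instead of rounding $n$ up.

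The step can be repaired at the cost of a worse constant: if $N'\ge 2N-1$, then $\mus_q(N)\le\mus_q(N')$, because multiplication in $\F_{q^N}$ reduces linearly to multiplication of polynomials of degree $<N$, and such products can be extracted from a symmetric algorithm for $\F_{q^{N'}}$ evaluated at a generator $\gamma$ of $\F_{q^{N'}}$ (the product has degree $\le 2N-2<N'$, so its coefficients depend linearly on its value at $\gamma$). Taking $N'=m\lceil(2N-1)/m\rceil$ your chaining then goes through, roughly doubling the constant. Alternatively --- and this is the route actually taken in \cite{ball1}, \cite{baro1}, \cite{bapirasi}, to which the paper attributes this theorem, and the one mirrored in Propositions \ref{newboundpcarré} and \ref{newboundp} --- one avoids rounding altogether by applying part 2) of Theorem \ref{theoprinc} over $\F_q$ itself: places of degree $2$ of $F/\F_q$ become rational after the constant field extension to $\F_{q^2}$, so $N_1+2N_2$ is large whenever the family is good over $\F_{q^2}$, and together with a non-special divisor of degree $g-1$ (the delicate point for $q=2,3$, which is why \cite{bapi2} and \cite{ceoz} treat those cases separately) this yields $\mus_q(n)\le 3n+2g$ for every $n$ with no divisibility constraint. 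In short, what you present as an afterthought is the standard proof; your primary route becomes valid only after replacing the faulty monotonicity step, and then gives a quantitatively weaker constant.
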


From different versions  of symmetric algorithms of Chudnovsky type  applied to good towers of algebraic function fields of type Garcia--Stichtenoth 
attaining the Drinfeld--Vladuts bounds of order one, two or four, different authors have obtained uniform bounds for the tensor rank of multiplication, 
namely general expressions for $C_q$, such as the following best
currently published estimates:

\begin{Theorem}\label{theo_arnaudupdate}
  Let ${q=p^r}$ be a power of a prime $p$ and let $n$ be an integer $>1$. Then:
  \begin{enumerate}[(i)]
   \item If ${q=2}$, then $\displaystyle{\mus_{q}(n) \leq 15.46n}$ (cf. \cite[Corollary 29]{bapi2} and \cite{ceoz})
   \item If ${q=3}$, then $\displaystyle{\mus_{q}(n) \leq  7.732 n}$ (cf. \cite[Corollary 29]{bapi2} and \cite{ceoz})
    \item If ${q\geq 4}$, then $\displaystyle{\mus_{q}(n) \leq 3 \left(1 +
      \frac{\frac{4}{3}p}{q-3+2(p-1)\frac{q}{q+1}} \right)n}$  (cf. \cite{bapirasi})
    \item  If $p\geq5$, then  $\displaystyle{\mus_{p}(n) \leq  3\left(1+
      \frac{8}{3p-5}\right)n}$  (cf. \cite{bapirasi})
      \item If ${q\geq 4}$, then  $\displaystyle{\mus_{q^2}(n) \leq 2 \left(1 +
      \frac{p}{q-3 + (p-1)\frac{q}{q+1}} \right)n}$  (cf. \cite{arna1} and \cite{bapirasi})
       \item If $p\geq5$, then  $\displaystyle{\mus_{p^2}(n) \leq 2 \left(1 +
      \frac{2}{p-\frac{33}{16}} \right)n}$  (cf. \cite{bapirasi})
  \end{enumerate}
\end{Theorem}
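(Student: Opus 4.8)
The plan is to obtain all six estimates by a single mechanism: specializing the symmetric Chudnovsky-type algorithm recorded in Theorem~\ref{theoprinc} to one carefully chosen step $F_k$ of a recursive tower $\mathcal{T}=(F_k)_{k\ge 0}$ of function fields attaining a Drinfeld--Vladuts bound, and then letting $n$ and $k$ grow together. For a square target field one takes a Garcia--Stichtenoth tower over that field, whose genera $g_k$ and rational-place counts satisfy $N_1(F_k)/g_k \to A(q)=q^{1/2}-1$; for the prime targets $\F_p$ and $\F_{p^2}$, where no tower over the base is known to reach the bound, one instead fixes such a tower over $\F_{p^2}$ (so that $N_1(F_k)/g_k \to p-1$) and descends it to the base field. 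In each case, given $n$ one selects the least level $k$ for which the numerical hypothesis of Theorem~\ref{theoprinc} is met, reads off $\mus_q(n)\le 2n+g_k-1$ (case~1) or $\mus_q(n)\le 3n+2g_k$ (case~2), and then estimates $\limsup_k g_k/n$. The side condition $2g_k+1\le q^{(n-1)/2}(q^{1/2}-1)$ is harmless because its right-hand side grows exponentially in $n$ while $g_k$ stays linear.

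For (v), and with the evident modifications for (i)--(iii), I would run case~1 of Theorem~\ref{theoprinc} over $\F_{q^2}$ using rational places only. The admissibility condition $N_1(F_k)>2n+2g_k-2$, combined with the explicit lower bound $N_1(F_k)\ge (q-1)(g_k-1)$ along the tower, forces $g_k$ above a threshold of order $n/(q-3)$ and gives $\Ms_{q^2}\le 2+2\limsup_k g_k/n$. The subtle point is that a decomposition is required for \emph{every} $n$, whereas the tower supplies only the discrete genera $g_k$; the controlling quantity is therefore the density $\limsup_k g_{k+1}/g_k$, which for a Garcia--Stichtenoth tower is dictated by the characteristic $p$ rather than by $q$. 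Quantifying the resulting overshoot is exactly what produces the term $(p-1)\frac{q}{q+1}$ in the denominator of (v) and explains why $p$ intrudes into a bound over $\F_{q^2}$.

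For the prime bound (iv), and similarly for (vi), I would use case~2. Descending the $\F_{p^2}$-tower to $\F_p$, each place of degree~$2$ of the descended field $F_k/\F_p$ splits into two rational places of the constant-field extension $F_k/\F_{p^2}$, so that $N_1+2N_2=N_1(F_k/\F_{p^2})\ge (p-1)(g_k-1)$. Inserting this into $N_1+2N_2>2n+2g_k-2$ and into $\mus_p(n)\le 3n+2g_k$ yields $\Ms_p\le 3+2\limsup_k g_k/n$, with the same density correction as before; matching coefficients recovers the ratio $g_k/n\to 12/(3p-5)$ implicit in (iv). Before invoking case~2 one must still check its extra hypothesis, the existence of a non-special divisor of degree $g_k-1$ on the descended field, which I would discharge using the known non-speciality results for these towers, valid once $g_k$ is large.

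The main obstacle, and the origin of the ungainly constants, is the \emph{simultaneous} tight control of three invariants at the chosen level: the genus $g_k$, the exact (not merely asymptotic) counts $N_1(F_k)$ and $N_2(F_k)$, and the gap $g_{k+1}/g_k$ to the next level. The Drinfeld--Vladuts bound only fixes the leading ratio $N_1/g\to q^{1/2}-1$, whereas the algorithm demands a strict inequality at a definite finite level, so one needs the closed-form genus and place formulas of the tower together with an optimization over both the level $k$ and the choice between cases~1 and~2. Picking $k$ too small breaks admissibility; picking it too large inflates $g_k/n$ and hence the constant. Balancing these two effects against the tower density, and carrying the bookkeeping that yields the $\frac{4}{3}$ and $\frac{33}{16}$ corrections, is where the genuine effort lies; substituting the tower's numerical invariants into Theorem~\ref{theoprinc} is then routine.
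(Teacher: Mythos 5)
The paper itself gives no proof of this theorem: it is a survey of the best previously published bounds, quoted with citations to \cite{bapi2}, \cite{ceoz}, \cite{arna1} and \cite{bapirasi}. Judged against the methods of those cited works, your sketch has genuine gaps, not just missing bookkeeping. The most serious one concerns (i) and (ii): no ``evident modification'' of Theorem~\ref{theoprinc} can ever yield them. Over $\F_2$ and $\F_3$ the hypotheses of that theorem are unsatisfiable for large $n$ with $g=O(n)$: since $N_1+2N_2$ is at most the number of rational places of the constant field extension to $\F_{q^2}$, the Drinfeld--Vladuts bound gives $N_1+2N_2\leq (q-1+o(1))\,g$, and for $q=2,3$ one has $q-1\leq 2$, so neither $N_1>2n+2g-2$ nor $N_1+2N_2>2n+2g-2$ can hold along any family with genus linear in $n$. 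The actual proofs of (i)--(ii) in \cite{bapi2} and \cite{ceoz} use a different machine altogether: the generalized Cenk--\"Ozbudak algorithm (evaluation at places of higher degree, with multiplicities) on towers with maximal Hasse--Witt invariant. A second, cruder error is your assignment of cases: (iii) is a bound over $\F_q$ with leading constant $3$, hence a case-2 (descent) bound, not case~1 over $\F_{q^2}$; conversely (vi) has leading constant $2$, so it must come from case~1 applied directly over $\F_{p^2}$ --- your proposed case-2 descent produces $3n+2g$ and can never give $2\left(1+\frac{2}{p-33/16}\right)n$.

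The third gap is the one most relevant to the present paper. For $q=p$ prime, the Garcia--Stichtenoth-type towers over $\F_{p^2}$ have steps of degree $p$, so the density $\limsup_k g_{k+1}/g_k$ you rely on is of order $p$, not bounded; ``taking the least admissible level'' then overshoots the threshold genus by a factor about $p$, and what you obtain is essentially bounds (iii) and (v) specialized at $q=p$, which are strictly weaker than (iv) and (vi) (roughly $3n$ versus $2n$ asymptotically in $p$ for the quadratic case). The constants $\frac{8}{3p-5}$ and $\frac{2}{p-33/16}$ cannot be reached by any optimization over such a tower. In \cite{bapirasi} they come from a structurally different source: families of modular/Shimura curves indexed by prime numbers $\ell$, with genus proportional to $\ell$ and many points in characteristic $p$, where Bertrand's postulate bounds the gap between consecutive primes and hence bounds the density of the family by $2$. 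This is the idea of \cite{ball5} that the present paper then sharpens --- replacing Bertrand's postulate by the Baker--Harman--Pintz and Dudek prime-gap theorems \cite{bahapi}, \cite{dude} to get density $1+o(1)$ --- in Proposition~\ref{newboundp} and its quadratic-extension analogue. Your skeleton (pick the least admissible member of a family, pay for the density) is indeed the right mechanism, but without the dense modular-curve families it cannot produce the stated constants for (iv) and (vi), however the $\frac{4}{3}$ and $\frac{33}{16}$ bookkeeping is arranged.
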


\subsection{New results}

The main goal of the paper is to improve the upper bounds for $\mus_{q}(n)$ from the previous theorem for the assertions concerning 
the extensions of finite fields $\F_{p^2}$ and $\F_{p}$ where $p$ is a prime number. Note that one of main ideas used in this paper was 
introduced in \cite{ball5} by the first author thanks to the use of the Chebyshev Theorem (or also called the Bertrand Postulat) to 
bound the gaps between prime numbers in order to construct families of modular curves as dense as possible. Later, motivated by \cite{ball5}, 
the approach of using such bounds on gaps between prime numbers (e.g. Baker-Harman-Pintz) was also used in the preprint \cite{Rand4} in order 
to improve the upper bounds of $\mus_{p^2}(n)$ where $p$ is a prime number. In our paper, we improve all the known uniform upper bounds 
for $\mus_{p^2}(n)$ and $\mus_{p}(n)$ for $p\geq 5$.

\section{New upper bounds}

In this section, we give new better upper bounds for the symmetric tensor rank of multiplication in certain extensions of 
finite fields $\F_{p^2}$ and $\F_{p}$. In order to do that, we construct suitable families of modular curves defined over $\F_{p^2}$ and $\F_{p}$.

\vspace{.5em}

\begin{Theorem}\label{lemmek0}
Let $l_k$ be the $k$-th prime number. Then there exists a real number $\alpha<1$ such that the difference between two consecutive prime numbers $l_k$ and $l_{k+1}$ satisfies 
$$l_{k+1}-l_k\leq l_k^{\alpha}$$ for any prime $l_k\geq x_{\alpha}.$ 

In particular, one can take $\alpha=\frac{21}{40}$ with the value of $x_{\alpha}$ that can in principle be determined effectively, or $\alpha=\frac{2}{3}$ with $x_{\alpha}=\exp(\exp(33.3)).$
\end{Theorem}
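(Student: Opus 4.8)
The plan is to deduce both assertions from known Hoheisel-type theorems on the existence of primes in short intervals, importing the entire analytic content as a black box. The first step is an elementary reduction: any statement of the form ``for every $x \geq x_0$ the interval $(x,\, x + x^{\theta}]$ contains a prime'' yields at once the desired gap estimate with $\alpha = \theta$ and $x_\alpha = x_0$. Indeed, applying such a statement with $x = l_k$ for an arbitrary prime $l_k \geq x_0$, the interval $(l_k,\, l_k + l_k^{\theta}]$ contains a prime; since $l_{k+1}$ is by definition the least prime exceeding $l_k$, it lies in this interval, whence $l_{k+1} - l_k \leq l_k^{\theta}$. Thus the whole theorem reduces to exhibiting admissible pairs $(\theta, x_0)$.

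For the value $\alpha = \frac{21}{40} = 0.525$ I would invoke the theorem of Baker, Harman and Pintz, which guarantees a prime in $[x,\, x + x^{0.525}]$ for all sufficiently large $x$. Their method is effective, so a threshold $x_\alpha$ exists and could in principle be extracted from the proof; since it is not made explicit in the literature one can only assert its existence, which is exactly the qualification in the statement.

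For the value $\alpha = \frac{2}{3}$ together with the explicit constant $x_\alpha = \exp(\exp(33.3))$, I would invoke the fully explicit result of Dudek (2016), whose quantitative short-interval statement is valid precisely from that threshold onward. Combined with the reduction of the first step, this delivers the explicit form of the claim.

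I expect the only delicate point to be the precise form of Dudek's statement needed to obtain the clean bound $l_{k+1} - l_k \leq l_k^{2/3}$ with no constant in front. A crude ``prime between consecutive cubes'' reading would produce gaps of order $(n+1)^3 - n^3 \approx 3 n^2 \approx 3\, l_k^{2/3}$, leaving a spurious factor that \emph{cannot} be absorbed into the threshold without enlarging the exponent beyond $2/3$. The point is therefore to use the short-interval formulation of the cited result, in which the admissible interval has length exactly $x^{2/3}$ from $x = \exp(\exp(33.3))$ onward. Granting the correct form, the conclusion is immediate from the reduction, and the entire depth resides in the cited prime-gap theorems.
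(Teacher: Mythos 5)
Your proposal is correct and takes essentially the same approach as the paper, whose proof likewise consists of citing Baker--Harman--Pintz for $\alpha=\frac{21}{40}$ (with a threshold that is effective in principle but never computed) and Dudek's explicit result for $\alpha=\frac{2}{3}$ with $x_\alpha=\exp(\exp(33.3))$. Your explicit reduction from short-interval statements to gap bounds, and your caveat that one must use the short-interval formulation of Dudek's theorem rather than the crude consecutive-cubes reading (which would lose a constant factor), are points the paper passes over silently — it quotes Baker--Harman--Pintz in the backward form $[x-x^{\alpha},x]$ and asserts the gap bound directly — so your write-up is, if anything, the more careful of the two.
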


\begin{proof}
It is known that for all $x>x_\alpha$, the interval $[x-x^{\alpha},x]$ with $\alpha=\frac{21}{40}$ contains prime numbers by a result 
of Baker, Harman and Pintz \cite[Theorem 1]{bahapi}. Moreover, the value of $x_{\alpha}$ can in principle be determined, according to the authors. 
However, to our knowledge, this computation has not been realized yet. 

For a bigger $\alpha=\frac{2}{3}$, Dudek obtained recently in \cite{dude} an explicit bound $x_{\alpha}\geq\exp(\exp(33.3)).$

\qed
\end{proof}

\subsection{The case of the quadratic extensions of prime fields}\label{caseqe}

\begin{propo}\label{newboundpcarré}
Let $p\geq 5$ be a prime number, and let $x_{\alpha}$ be the constant from Theorem \ref{lemmek0}.

\begin{enumerate}
\item If $p\neq 11,$ then for any integer $\ds n\geq \frac{p-3}{2}x_\alpha+\frac{p+1}{2}$ we have

$$
\mus_{p^2}(n) \leq 2\left(1+\frac{1+\epsilon_p(n)}{p-3}\right)n-\frac{(1+\epsilon_p(n))(p+1)}{p-3} - 1,
$$
where $\ds\epsilon_p(n)=\left(\frac{2n}{p-3}\right)^{\alpha-1}.$

\item For $p=11$ and $\ds n\geq (p-3)x_\alpha+p-1=8x_\alpha+10$ we have
$$
\mus_{p^2}(n) \leq 2\left(1+\frac{1+\epsilon_p(n)}{p-3}\right)n-\frac{2(1+\epsilon_p(n))(p-1)}{p-3},
$$
where $\ds\epsilon_{p}(n)=\left(\frac{n}{p-3}\right)^{\alpha-1}.$

\item Asymptotically the following inequality holds for any $p\geq 5$:
$$
\Ms_{p^2} \leq 2\left(1+\frac{1}{p-3}\right).
$$
\end{enumerate}
\end{propo}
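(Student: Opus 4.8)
The plan is to apply the first assertion of Theorem~\ref{theoprinc} with $q = p^2$ to a carefully chosen member of a family of modular curves over $\F_{p^2}$ whose ratio $N_1/g$ approaches the Drinfeld--Vladuts bound $A(p^2) = p-1$. Concretely, I would fix a family $\{X_l\}$ of modular curves defined over $\F_{p^2}$, indexed by the prime levels $l \neq p$, for which the genus $g_l$ is an explicit increasing function of $l$ and for which the number of rational places satisfies a lower bound of the form $N_1(X_l) \geq (p-1)(g_l + 1)$, so that the Drinfeld--Vladuts bound is attained asymptotically. These are the reductions modulo $p$ of classical modular curves, whose $\F_{p^2}$-rational points come from supersingular points and cusps.

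Given $n$, the idea is to pick the curve of least genus in the family satisfying the hypothesis $N_1 > 2n + 2g - 2$ of part~(1) of Theorem~\ref{theoprinc}. Substituting the point count $N_1 \geq (p-1)(g+1)$ turns this requirement into $(p-3)g > 2n - p - 1$, i.e. $g > g_{\min} := \frac{2n-p-1}{p-3}$. The conclusion of Theorem~\ref{theoprinc}(1) then reads $\mus_{p^2}(n) \leq 2n + g - 1$, so the whole problem reduces to controlling how far the genus of the chosen curve exceeds $g_{\min}$. I would also check the remaining hypothesis $2g+1 \leq p^{n-1}(p-1)$ of Theorem~\ref{theoprinc}, which is automatic here since $g$ is linear in $n$ while the right-hand side grows exponentially.

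This is exactly where Theorem~\ref{lemmek0} enters. Because the achievable genera $g_l$ grow linearly in the prime level $l$, the gap between two consecutive achievable genera is controlled by the gap between consecutive primes, hence by Theorem~\ref{lemmek0} is at most of order $g^\alpha$ once the levels exceed $x_\alpha$. Selecting the smallest admissible curve therefore yields a genus $g \leq (1 + \epsilon_p(n))\, g_{\min}$, where the relative overshoot is of the order of $g_{\min}^{\alpha-1}$ and is captured by $\epsilon_p(n) = \left(\frac{2n}{p-3}\right)^{\alpha-1}$ after absorbing the constants coming from the genus formula. The hypothesis $n \geq \frac{p-3}{2}x_\alpha + \frac{p+1}{2}$ is precisely the inequality $g_{\min} \geq x_\alpha$ guaranteeing that the prime-gap bound applies to the relevant levels. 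Plugging $g = (1+\epsilon_p(n))\frac{2n-p-1}{p-3}$ into $\mus_{p^2}(n) \leq 2n + g - 1$ gives the stated bound in part~(1). For part~(3), since $\alpha < 1$ we have $\epsilon_p(n) \to 0$ as $n \to \infty$, so dividing the bound by $n$ and taking the $\limsup$ leaves only $2\left(1 + \frac{1}{p-3}\right)$, which is the asserted value of $\Ms_{p^2}$.

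Finally, the case $p = 11$ (part~(2)) must be treated separately because the arithmetic of the modular family degenerates there: for $p = 11$ the supersingular point count, and hence the effective lower bound for $N_1$ together with the genus profile of the family, differs from the generic case, which shifts the threshold genus and the scale of the prime-gap parameter and produces the slightly different constants and the variant $\epsilon_p(n) = \left(\frac{n}{p-3}\right)^{\alpha-1}$. The hard part, and the real content of the argument, is not the Chudnovsky step (a direct application of Theorem~\ref{theoprinc}) but the construction of a sufficiently \emph{dense} family of modular curves over $\F_{p^2}$: one must exhibit, for each prime level, a curve with a provable lower bound $N_1 \geq (p-1)(g+1)$ attaining the Drinfeld--Vladuts bound and with explicitly controlled genus, and then verify that the genus gaps are genuinely governed by prime gaps so that Theorem~\ref{lemmek0} can be invoked.
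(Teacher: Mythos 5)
Your proposal is correct and follows essentially the same route as the paper: the authors take the dense family $X_k=X_0(11\,l_k)$ over $\F_{p^2}$ (genus $g_k=l_k$, with $N_1\geq (p-1)(l_k+1)$), select the smallest prime level for which the sufficient condition $(p-1)(g+1)>2n+2g-2$ of Theorem~\ref{theoprinc}(1) holds, and control the genus overshoot via the prime-gap bound of Theorem~\ref{lemmek0}, exactly as you outline. For $p=11$ they indeed switch to a different family, $X_0(23\,l_k)$ (genus $2l_k+1$, $N_1\geq 2(p-1)(l_k+1)$), since the level must avoid the characteristic, which produces the modified constants and the variant $\epsilon_p(n)$ in your part~(2).
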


\begin{proof}

First, let us consider the characteristic $p$ such that $p\neq 11$. 
Then it is known (\cite[Corollary 4.1.21]{tsvl2} and \cite[proof of Theorem 3.9]{shtsvl})  
that the modular curve $X_k=X_0(11l_k)$, where $l_k$ is the $k$-th prime number, is of genus $g_k=l_k$ and satisfies 
${N_1(X_k(\F_{p^2}))\geq (p-1)(g_k+1)},$ where $N_1(X_k(\F_{p^2}))$ denotes the number 
of  rational points over $\F_{p^2}$ of the curve $X_k$.
Let us consider an integer $n>1$.
Then there exist two consecutive prime numbers $l_k$ and $l_{k+1}$ such 
that 
\begin{equation}
(p-1)(l_{k+1}+1)> 2n+2l_{k+1}-2
\end{equation} 
and 
\begin{equation}\label{Firstinequa2}
(p-1)(l_k+1)\leq 2n+2l_k-2
\end{equation}
(here we use the fact that $p\geq 5$). Let us consider  the algebraic function field 
$F_{k+1}/\F_{p^2}$ associated to the curve $X_{k+1}$ of genus $l_{k+1}$ defined over $\F_{p^2}$. 
Denoting by $N_i(F_{k}/\F_{p^2})$ the number 
of places of degree $i$ of $F_{k}/\F_{p^2}$, we get
$$
{N_1(F_{k+1}/\F_{p^2})\geq (p-1)(l_{k+1}+1)> 2n+2l_{k+1}-2}.
$$ 

We also know that $l_{k+1}-l_k\leq l_k^{\alpha},$  when $l_{k}\geq x_{\alpha}$ 
by Theorem \ref{lemmek0}. Thus $l_{k+1}\leq (1+\epsilon(l_k))l_k,$ with $\epsilon(l_k)= l_k^{\alpha-1}$.

It is easy to check that the inequality $2g+1\leq q^{\frac{n-1}{2}}(q^{\frac{1}{2}}-1)$ of Theorem \ref{theoprinc} holds for any prime power $q\geq 5.$ Indeed, it is enough to verify that
$$
q^{l_k\frac{p-3}{4}+\frac{p-1}{4}}(q^{\frac{1}{2}}-1)\geq 2(1+\epsilon(l_k))l_k+1,
$$ 
which is true since
$$
q^{x\frac{p-3}{4}+\frac{p-1}{4}}(q^{\frac{1}{2}}-1) -4x -1 \geq 0
$$
for any $x\geq 0.$

Thus, for any integer $n\geq\frac{p-3}{2}x_{\alpha}+\frac{p+1}{2}$ the function field $F_{k+1}/\F_{p^2}$ satisfies Theorem \ref{theoprinc}, so
$$
\mus_{p^2}(n)\leq 2n+l_{k+1}-1\leq 2n+(1+\epsilon(l_k))l_k-1,
$$ 
with $l_k\leq \frac{2n}{p-3}-\frac{p+1}{p-3}$  by \eqref{Firstinequa2}.

Let us remark that, as $l_k\leq \frac{2n}{p-3}$, $\epsilon(l_k)\leq \epsilon_p(n)=(\frac{2n}{p-3})^{\alpha-1},$ which gives the first inequality. 

Now, let us consider the characteristic $p=11$. Take the modular curve $X_k=X_0(23l_k)$, where $l_k$ is the $k$-th prime number. By \cite[Proposition 4.1.20]{tsvl2}, we easily compute that the genus of $X_k$ is $g_k=2l_k+1$. 
It is also known that the curve $X_k$ has good reduction modulo $p$ outside $23$ and $l_k$.
Moreover, by using \cite[Proof of Theorem 4.1.52]{tsvl2}, we obtain that the number of $\F_{p^2}$-rational points 
over  of the reduction  $X_k/p$ modulo $p$ satisfies 
$$
N_1(X_k(\F_{p^2}))\geq \frac{\mu_N(p-1)/12}{\deg \lambda_N} \geq 2(p-1)(l_k+1)
$$
in the notation of loc. cit.

Let us take an integer $n>1$. There exist two consecutive prime numbers $l_k$ and $l_{k+1}$ such 
that 
$$
{2(p-1)(l_{k+1}+1)> 2n+2(2l_{k+1}+1)-2}$$ and $${2(p-1)(l_k+1)\leq 2n+2(2l_k+1)-2},
$$ 
i.e.
\begin{equation}
(p-1)(l_{k+1}+1)> n+2l_{k+1} 
\end{equation}
and 
\begin{equation}\label{Secondinequa2}
(p-1)(l_k+1)\leq n+2l_k.
\end{equation}
Let us consider  the algebraic function field 
$F_{k+1}/\F_{p^2}$ associated to the curve $X_{k+1}$ of genus $g_{k+1}=2l_{k+1}+1$ defined over $\F_{p^2}$. We have
$$
N_1(F_{k+1}/\F_{p^2})\geq 2(p-1)(l_{k+1}+1)> 2n+4l_{k+1}.
$$ 
 
As before $l_{k+1}\leq (1+\epsilon(l_k))l_k,$ with $\epsilon(l_k)= l_k^{\alpha-1}$.

It is also easy to check that the inequality $2g+1\leq q^{\frac{n-1}{2}}(q^{\frac{1}{2}}-1)$ of Theorem \ref{theoprinc} holds when $q$ is a power of $11,$ which  follows from the fact that
$$
11^{4l_k+\frac{9}{2}}(11^{\frac{1}{2}}-1)\geq 8 l_k+3.
$$ 

Thus, for any integer $n\geq (p-3)x_{\alpha}+p-1$, 
the algebraic function field $F_{k+1}/\F_{p^2}$ satisfies Theorem \ref{theoprinc}, 
so 
$$
\mus_{p^2}(n)\leq 2n+2l_{k+1}\leq 2n+2(1+\epsilon(l_k))l_k
$$ 
with $l_k\leq \frac{n}{p-3}-\frac{p-1}{p-3}$  by \eqref{Secondinequa2}.  

We remark that as $l_k\leq \frac{n}{p-3}$, $\epsilon(l_k)\leq\epsilon_p(n)=(\frac{n}{p-3})^{\alpha-1},$ which gives the second inequality of the proposition.

Finally, when $n\rightarrow +\infty,$ the prime numbers $l_k\rightarrow +\infty,$ thus both for $p\neq 11$ and $p=11$ the corresponding $\epsilon_p(n) \rightarrow 0.$ So in the two cases
we obtain 
$$
M^{sym}_{p^2}\leq 2\left(1+\frac{1}{p-3}\right).
$$
\qed
\end{proof}

\begin{rema}
It is easy to see that the bounds obtained in Proposition \ref{newboundpcarré} are generally better than the published best known 
bounds (v) and (vi) recalled in Theorem \ref{theo_arnaudupdate}. Indeed, it is sufficient to consider the asymptotic bounds which are deduced from them and to see that 
 for any prime $p\geq 5$ we have $\frac{1}{p-3}<\frac{p}{p-3+(p-1)\frac{p}{p+1}}$ and  $\frac{1}{p-3}<\frac{2}{p-\frac{33}{16}}$ respectively.
\end{rema}

\begin{rema}
Note that the bounds obtained in \cite[Corollary 28]{Rand4} also concern the symmetric tensor rank of multiplication in the finite fields even if it is not mentioned.
Indeed, the distinction between $\mus_{q}(n)$ and $\mu_{q}(n)$ was exploited only from \cite{Rand3}. So, we can compare our proposition \ref{newboundpcarré} 
with Corollary 8 there. Firstly, note that the bounds in \cite[Corollary 28]{Rand4} are only valid for $p\geq 7$. Moreover, the only bound which is best than our bounds 
is the asymptotic bound \cite[Corollary 28, Bound (vi)]{Rand4} given for  an unknown sufficiently large $n$, contrary to our uniform bound with $\alpha=\frac{2}{3}$ for $n\geq exp(\exp(33.3))$. 
\end{rema}

\subsection{The case of prime fields}

\begin{propo}\label{newboundp}
Let $p\geq 5$ be a prime number, let $x_{\alpha}$ be defined as in Lemma \ref{lemmek0}, and $\epsilon_p(n)$ as in Proposition \ref{newboundpcarré}.

\begin{enumerate}
\item If $p\neq 11,$ then for any integer $\ds n\geq \frac{p-3}{2}x_\alpha+\frac{p+1}{2}$ we have

$$
\mus_{p}(n) \leq 3\left(1+\frac{\frac{4}{3}(1+\epsilon_p(n))}{p-3}\right)n-\frac{2(1+\epsilon_p(n))(p+1)}{p-3}.
$$

\item For $p=11$ and $\ds n\geq (p-3)x_\alpha+p-1=8x_\alpha+10$ we have
$$
\mus_{p}(n) \leq 3\left(1+\frac{\frac{4}{3}(1+\epsilon_p(n))}{p-3}\right)n-\frac{4(1+\epsilon_p(n))(p-1)}{p-3}+1.
$$

\item Asymptotically the following inequality holds for any $p\geq 5$:
$$
\Ms_{p} \leq 3\left(1+\frac{\frac{4}{3}}{p-3}\right).
$$
\end{enumerate}
\end{propo}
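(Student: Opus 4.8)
The plan is to follow the proof of Proposition~\ref{newboundpcarré} step by step, but to work with the \emph{same} modular curves viewed over the prime field $\F_p$ instead of over $\F_{p^2}$, and to invoke part~(2) of Theorem~\ref{theoprinc} in place of part~(1). The curves $X_0(11l_k)$ (resp.\ $X_0(23l_k)$ when $p=11$) are already defined over $\F_p$, so the function field $F_{k+1}/\F_{p^2}$ of the previous proof is the constant-field extension of a function field $F_{k+1}/\F_p$ of the same genus $g_{k+1}$. The decisive observation is that, over $\F_p$, the $\F_{p^2}$-rational points decompose into rational points and Galois-conjugate pairs of degree-two points, so that
$$
N_1(X_{k+1}(\F_{p^2}))=N_1(F_{k+1}/\F_p)+2N_2(F_{k+1}/\F_p).
$$
Thus the lower bound $N_1(X_{k+1}(\F_{p^2}))\geq(p-1)(l_{k+1}+1)$ (resp.\ $\geq 2(p-1)(l_{k+1}+1)$) becomes exactly a lower bound on the quantity $N_1+2N_2$ appearing in the hypothesis of part~(2).

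First I would reuse, without change, the choice of two consecutive primes $l_k,l_{k+1}$, the genus of $X_{k+1}$, and the verification of $2g+1\leq q^{(n-1)/2}(q^{1/2}-1)$ from Proposition~\ref{newboundpcarré}: since neither the genus nor the point-count bound has changed, the defining inequalities are identical, and the genus inequality still holds for the prime power $q=p\geq 5$. The strict inequality already established then reads $N_1+2N_2>2n+2g-2$, and part~(2) of Theorem~\ref{theoprinc} gives $\mus_p(n)\leq 3n+2g_{k+1}$. Substituting $g_{k+1}=l_{k+1}$ (resp.\ $g_{k+1}=2l_{k+1}+1$), bounding $l_{k+1}\leq(1+\epsilon(l_k))l_k$ by Theorem~\ref{lemmek0}, and using the upper bound on $l_k$ furnished by \eqref{Firstinequa2} (resp.\ \eqref{Secondinequa2}), a routine computation of exactly the same shape as in Proposition~\ref{newboundpcarré} yields the inequalities (1) and (2).

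The main obstacle is the supplementary hypothesis of part~(2): one must exhibit a non-special divisor of degree $g_{k+1}-1$ on $F_{k+1}/\F_p$. I would dispose of this by appealing to the known existence results for non-special divisors of degree $g-1$ over $\F_q$ with $q\geq 4$; since $p\geq 5$ these apply to each $F_{k+1}/\F_p$, so the hypothesis is met. A secondary point to spell out is the descent itself, namely that $X_0(11l_k)$ and $X_0(23l_k)$ are genuinely defined over $\F_p$ and retain the quoted $\F_{p^2}$-point counts after reduction; this is the same geometric input already used in Proposition~\ref{newboundpcarré}. Finally, for assertion~(3) I would let $n\to\infty$: then $l_k\to\infty$ and $\epsilon_p(n)\to 0$ in both cases, the additive lower-order terms vanish after division by $n$, and the leading coefficient tends to $3\left(1+\frac{\frac{4}{3}}{p-3}\right)$, which gives the claimed bound on $\Ms_p$.
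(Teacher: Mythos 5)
Your proposal is correct and follows essentially the same route as the paper's own proof: the same modular curves $X_0(11l_k)$ (resp.\ $X_0(23l_k)$) descended to $\F_p$, the same identity $N_1(F_{k+1}/\F_{p^2})=N_1(F_{k+1}/\F_p)+2N_2(F_{k+1}/\F_p)$ feeding the hypothesis of Theorem~\ref{theoprinc}, 2), the same prime-gap and genus estimates reused from Proposition~\ref{newboundpcarré}, and the non-special divisor of degree $g_{k+1}-1$ supplied by the known existence result (the paper cites \cite[Theorem 11 (i)]{balb}) exactly as you suggest.
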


\begin{proof}
It suffices to consider the same families of curves as in the proof of Proposition \ref{newboundpcarré}. 

When $p\neq 11$ we take $X_k=X_0(11l_k)$, where $l_k$ is the $k$-th prime number. These curves are defined over $\F_p$, hence, we can consider the associated algebraic function fields $F_k/\F_p$ defined over $\F_p$ and we have $N_1(F_{k}/\F_{p^2})=N_1(F_{k}/\F_{p})+2N_2(F_{k}/\F_{p})\geq (p-1)(l_{k}+1),$ 
since $F_{k}/\F_{p^2}=F_{k}/\F_{p}\otimes_{\F_p} \F_{p^2}$ for any $k$. 
Note that the genus of the algebraic function fields $F_k/\F_p$ is also $g_k=l_k,$ 
since the genus is preserved under descent.

Given an integer $n>1$, there exist two consecutive prime numbers $l_k$ and $l_{k+1}$ such 
that 
\begin{equation}
(p-1)(l_{k+1}+1)> 2n+2l_{k+1}-2
\end{equation} 
and 
\begin{equation}\label{Thirdinequa2}
(p-1)(l_k+1)\leq 2n+2l_k-2. 
\end{equation}

Let us consider  the algebraic function field 
$F_{k+1}/\F_{p}$ associated to the curve $X_{k+1}$ of genus $l_{k+1}$ defined over $\F_{p}$. We get
$$
N_1(F_{k+1}/\F_{p})+2N_2(F_{k+1}/\F_{p})\geq (p-1)(l_{k+1}+1)> 2n+2l_{k+1}-2.
$$ 

As before $l_{k+1}\leq (1+\epsilon(l_k))l_k,$ with $\epsilon(l_k)= l_k^{\alpha-1},$ and from the proof of the previous proposition we know that the inequality $2g+1\leq q^{\frac{n-1}{2}}(q^{\frac{1}{2}}-1)$ of Theorem \ref{theoprinc} holds. Consequently, for any integer $n\geq \frac{p-3}{2}x_{\alpha}+\frac{p+1}{2}$, the algebraic function field $F_{k+1}/\F_{p}$ satisfies Theorem \ref{theoprinc}, 2) since by \cite[Theorem 11 (i)]{balb} 
there always exists a non-special divisor of degree $g_{k+1}-1$ for $p\geq 5$.
So 
$$
\mus_{p}(n)\leq 3n+2l_{k+1}\leq 3n+2(1+\epsilon(l_k))l_k
$$ 
with $l_k\leq \frac{2n}{p-3}-\frac{p+1}{p-3}$ by \eqref{Thirdinequa2}. As before, $\epsilon(l_k)\leq\epsilon_p(n)=(\frac{2n}{p-3})^{\alpha-1}.$

When $p=11$ we use once again the family of curves $X_k=X_0(23l_k).$ They are defined over $\F_p,$ hence we can consider the associated algebraic function fields $F_k/\F_p$ over $\F_p$ 
and we have $N_1(F_{k}/\F_{p^2})=N_1(F_{k}/\F_{p})+2N_2(F_{k}/\F_{p})\geq (p-1)(l_{k}+1).$ The genus of the algebraic function fields $F_k/\F_p$ defined over $\F_p$ is also $g_k=2l_k+1$ 
since the genus is preserved under descent.

Given an integer $n>1$, there exist two consecutive prime numbers $l_k$ and $l_{k+1}$ such 
that 
$$
2(p-1)(l_{k+1}+1)> 2n+2(2l_{k+1}+1)-2
$$ 
and 
$$
2(p-1)(l_k+1)\leq 2n+2(2l_k+1)-2, 
$$
i.e. 
\begin{equation}
(p-1)(l_{k+1}+1)> n+2l_{k+1} 
\end{equation}
and 
\begin{equation}\label{Fourinequa2}
(p-1)(l_k+1)\leq n+2l_k.
\end{equation}
 
Let us consider  the algebraic function field 
$F_{k+1}/\F_{p}$ associated to the curve $X_{k+1}$ of genus $g_{k+1}=2l_{k+1}+1$ defined over $\F_{p}$. 
We get
$$
N_1(F_{k+1}/\F_{p})+2N_2(F_{k+1}/\F_{p})\geq 2(p-1)(l_{k+1}+1)> 2n+2(2l_{k+1}+1)-2.
$$ 

As above $l_{k+1}\leq (1+\epsilon(l_k))l_k,$ with $\epsilon(l_k)= l_k^{\alpha-1},$ and the inequality $2g+1\leq q^{\frac{n-1}{2}}(q^{\frac{1}{2}}-1)$ of Theorem \ref{theoprinc} holds.  Consequently, for any integer $n\geq (p-3)x_{\alpha}+ p-1$, the algebraic function field $F_{k+1}/\F_{p}$ satisfies Theorem \ref{theoprinc}, 2) since, as before, there exists a non-special divisor of degree $g_{k+1}-1$ by \cite[Theorem 11 (i)]{balb}. So,
$$
\mus_{p}(n)\leq 3n+2g_{k+1}\leq 3n+2(2l_{k+1}+1)\leq 3n+2(1+\epsilon)l_k
$$ 
with $l_k\leq \frac{n}{p-3}-\frac{p-1}{p-3}$ by \eqref{Fourinequa2}. We can also bound $\epsilon(l_k)\leq\epsilon_p(n)=(\frac{n}{p-3})^{\alpha-1}.$

Finally, when $n\rightarrow +\infty,$ the prime numbers $l_k\rightarrow +\infty,$ thus both for $p\neq 11$ and $p=11,$ $\epsilon_p(n) \rightarrow 0.$ So 
we obtain $M^{sym}_{p}\leq 3\left(1+\frac{\frac{4}{3}}{p-3}\right)$. \qed

\end{proof}

\begin{rema}
It is easy to see that the bounds obtained in Proposition \ref{newboundp} are generally better than the best known 
bounds (iii) and (iv) recalled in Theorem \ref{theo_arnaudupdate}. Indeed, it is sufficient to consider the asymptotic bounds which are deduced from them and to see that 
for any prime $p\geq 5$ we have $\frac{\frac{4}{3}}{p-3}<\frac{\frac{4}{3}p}{p-3+\frac{2(p-1)p}{p+1}}$ and  $\frac{\frac{4}{3}}{p-3}<\frac{8}{3p-5}$ respectively.

\end{rema}

\section*{Acknowledgments}
 The first author wishes to thank Sary Drappeau, Olivier Ramaré, Hugues Randriambololona, Joël Rivat and Serge Vladuts for valuable discussions.

\end{document}